\numberwithin{equation}{section}
\theoremstyle{plain}
	\newtheorem{thm}[equation]{Theorem}
	\newtheorem{prop}[equation]{Proposition}
	\newtheorem{lem}[equation]{Lemma}
	\newtheorem{cor}[equation]{Corollary}
	\newtheorem{lem/defn}[equation]{Lemma/Definition}
\theoremstyle{definition}
	\newtheorem{defn}[equation]{Definition}
\theoremstyle{remark}
	\newtheorem{rem}[equation]{Remark}
\def\dg{{\mathrm dg}}
\def\Moddg{\operatorname{Mod}_\dg}
\def\nc{\newcommand}
\def\on{\operatorname}
\def\mf{\on{mf}}
\def\builds{\models}
\def\cT{{\mathcal T}}
\def\cS{{\mathcal S}}
\def\cX{{\mathcal X}}
\def\Thick{\on{Thick}}
\nc{\edit}[1]{\marginpar{\footnotesize{#1}}}
\nc{\C}{\mathbb{C}}
\nc{\Q}{\mathbb{Q}}
\nc{\Z}{\mathbb{Z}}
\nc{\PP}{\mathbb{P}}
\nc{\R}{\mathbb{R}}
\nc{\LL}{\mathbb{L}}
\nc{\OO}{\mathcal{O}}
\nc{\X}{\EuScript{X}}
\nc{\cC}{\EuScript{C}}
\nc{\cE}{\EuScript{E}}
\nc{\cA}{\EuScript{A}}
\nc{\sZ}{\EuScript{Z}}
\nc{\id}{{\on{id}}}
\nc\Hom{{\on{Hom}}}
\nc\cone{{\on{cone}}}
\nc\Ob{{\on{Ob}}}
\nc\Spec{{\on{Spec}}}
\nc\Mod{{\on{Mod}}}
\nc\Perf{{\on{Perf}}}
\nc\End{{\on{End}}}
\nc{\into}{\hookrightarrow}
\nc{\tr}{\on{tr}}
\nc{\ev}{\on{ev}}
\nc{\im}{\on{im}}
\nc{\Mot}{\on{Mot}}
\nc{\pt}{\on{pt}}
\nc{\coker}{\on{coker}}
\nc{\rk}{\on{rank}}
\nc{\TOP}{\on{Top}_{\mathbb{C}}^{s}}
\nc{\gr}{\on{gr}}
\nc{\Catperf}{\text{Cat}^{\text{perf}}}
\nc{\Sym}{\on{Sym}}
\nc{\xra}{\xrightarrow}
\nc{\lra}{\xleftarrow}
\nc{\Bet}{\mathbf{Betti}_{X}}
\nc{\codim}{\on{codim}}
\nc{\Fred}{\on{Fred}}
\nc{\colim}{\on{colim}}
\nc{\KK}{{\bf K}}
\nc{\Sp}{\on{Sp}}
\nc{\onto}{\twoheadrightarrow}
\nc{\A}{\mathbb{A}}
\nc{\Aff}{\on{Aff}}
\nc{\SH}{\on{SH}}
\nc{\QCoh}{\on{QCoh}}
\nc{\Alg}{\on{Alg}}
\nc{\Br}{\on{Br}}
\nc{\ta}{\widetilde{\a}}
\nc{\Shv}{\on{Shv}}
\nc{\GG}{\mathbb{G}}
\nc{\red}{\color{red}}
\nc{\an}{\on{an}}
\nc{\D}{\on{D}}
\nc{\qc}{\on{qc}}
\nc{\op}{{\on{op}}}
\nc{\shEnd}{{\mathcal End}}
\nc{\Sph}{\mathbb{S}}
\nc{\Top}{\on{Top}}
\nc{\Map}{\on{Map}}
\nc{\Vect}{\on{Vect}}
\nc{\holim}{\on{holim}}
\nc{\GL}{\on{GL}}
\newcommand\blfootnote[1]{%
  \begingroup
  \renewcommand\thefootnote{}\footnote{#1}%
  \addtocounter{footnote}{-1}%
  \endgroup
}
\def\a{\alpha}
\def\Perf{\on{Perf}}
\def\Sp{\on{Sp}}
\def\cC{\mathcal{C}}
\def\cD{\mathcal{D}}
\def\QCoh{\on{QCoh}}
\def\an{\on{an}}
\def\nc{\on{nc}}
\def\fm{\mathfrak{m}}
\def\wQ{\widehat{Q}}
\def\m{\mathfrak{m}}
\def\stab{{\on{stab}}}
\title{Idempotent completions of equivariant matrix factorization categories}
\author{Michael K. Brown}
\author{Mark E. Walker}
\date{}
\subjclass[2020]{13J10, 13J15, 18G35, 18G80}
\begin{document}
\setcounter{page}{1} \thispagestyle{empty}
\maketitle
\blfootnote{The second author was partially supported by National   Science Foundation grant DMS-2200732.}
\begin{abstract}
We prove that equivariant matrix factorization categories associated to henselian local hypersurface rings are idempotent complete, generalizing a result of Dyckerhoff in the non-equivariant case. 
\end{abstract}
\section{Introduction}

It follows from a result of Dyckerhoff \cite[Lemma 5.6]{dyckerhoff} that matrix factorization categories associated to complete local hypersurface rings are idempotent complete.
In this paper, we generalize this result to the equivariant case.

Throughout the paper, we let $G$ be a finite group acting on a
noetherian local ring $(Q,\m)$ in such a way that $Q$ is module finite
over the invariant subring $Q^G$,
and we assume $f$ is  an element of $\m$ that is fixed by $G$.
(The assumption that $Q^G \into Q$ is module finite holds quite generally; see Remark~\ref{finiteextension}.)  We write $[\mf_G(Q,f)]$ for the (triangulated) homotopy category of $G$-equivariant matrix factorizations of $f$;
see \S\ref{mf} for the definition. Our main result is:

\begin{thm} \label{main}
In the setting above: if $Q$ is henselian, then $[\mf_G(Q, f)]$ is idempotent complete.
\end{thm}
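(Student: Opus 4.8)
The plan is to show that $[\mf_G(Q,f)]$ is a Krull--Schmidt category, i.e.\ that every object is a finite direct sum of objects with local endomorphism ring; this suffices because any additive category with this property is idempotent complete. Indeed, if $e\colon X\to X$ is idempotent and $X=X_1\oplus\cdots\oplus X_n$ with each $\End(X_i)$ local, one induces on $n$: the case $n\le 1$ is clear since a local ring has no idempotents besides $0$ and $1$, and for $n\ge 2$ one writes $e$ as a $2\times 2$ matrix relative to $X=X_1\oplus\bigl(\bigoplus_{i\ge2}X_i\bigr)$; since $\End(X_1)$ is local, one of $e$ and $1-e$ has invertible $(1,1)$-entry, and after replacing $e$ by $1-e$ if necessary (which interchanges splittings of the two) a standard computation conjugates $e$ by a unit to a block idempotent $\bigl(\begin{smallmatrix}1&0\\0&e'\end{smallmatrix}\bigr)$, whose idempotent $e'$ is split by induction.

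The essential ring-theoretic point is that $Q^G$ is a henselian local ring; granting this, every module-finite $Q^G$-algebra is semiperfect, by a standard fact. That $Q^G$ is local follows from integrality of $Q$ over $Q^G$. For henselianness, let $g\in Q^G[t]$ be monic and suppose $\bar g=\bar h\,\bar k$ is a coprime factorization of its reduction mod $\m\cap Q^G$ into monic polynomials. As $Q^G/(\m\cap Q^G)\hookrightarrow Q/\m$ is a finite field extension, $\bar h$ and $\bar k$ are still coprime over $Q/\m$, so by Hensel's lemma in $Q$ there is a unique lift $g=hk$ with $h,k\in Q[t]$ monic. For $\sigma\in G$, applying $\sigma$ gives another such lift (as $\sigma$ fixes $g$ and its reduction), so uniqueness forces $h,k\in Q[t]^G=Q^G[t]$, proving $Q^G$ henselian.

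Next consider the exact category $\mf_G(Q,f)$ underlying $[\mf_G(Q,f)]$, as recalled in $\S\ref{mf}$; its objects are built from finitely generated $Q\#G$-modules that are free over $Q$, together with the $G$-equivariant differential squaring to $f$. It is idempotent complete, because a direct summand of a $Q$-free module over the local ring $Q$ is again $Q$-free, so summands of objects of $\mf_G(Q,f)$ lie again in $\mf_G(Q,f)$. Moreover, for objects $X,Y$ the morphism group $\Hom_{\mf_G(Q,f)}(X,Y)$ is a $Q^G$-submodule of $\Hom_{Q\#G}(X_0,Y_0)\oplus\Hom_{Q\#G}(X_1,Y_1)$, which in turn is a $Q^G$-submodule of the finitely generated $Q^G$-module $\Hom_Q(X_0,Y_0)\oplus\Hom_Q(X_1,Y_1)$, hence is finitely generated over the noetherian ring $Q^G$ (noetherian by Eakin--Nagata). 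Thus $\End_{\mf_G(Q,f)}(X)$ is a module-finite $Q^G$-algebra, so semiperfect; combined with idempotent completeness this shows that each object of $\mf_G(Q,f)$ decomposes as a finite direct sum of objects with local endomorphism ring.

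Finally I transfer this to the homotopy category. Morphism groups in $[\mf_G(Q,f)]$ are quotients of the corresponding morphism groups in $\mf_G(Q,f)$ by the two-sided ideal of null-homotopic maps, so $\End_{[\mf_G(Q,f)]}(X)$ is always a quotient of $\End_{\mf_G(Q,f)}(X)$. Taking a decomposition $X\cong\bigoplus_i X_i$ in $\mf_G(Q,f)$ with each $\End_{\mf_G(Q,f)}(X_i)$ local and discarding the contractible summands, we obtain that in $[\mf_G(Q,f)]$ every object is a finite direct sum of objects whose endomorphism ring, being a nonzero quotient of a local ring, is local. By the first paragraph, $[\mf_G(Q,f)]$ is idempotent complete. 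I expect the main obstacle to be organizational rather than substantive: proving $Q^G$ is henselian, and presenting $\mf_G(Q,f)$ so that its chain-level morphism modules are manifestly finite over $Q^G$. Once this is in place, the argument reduces to the classical Krull--Schmidt property of module-finite algebras over henselian local rings, together with the elementary fact that passing to the homotopy category can only shrink endomorphism rings.
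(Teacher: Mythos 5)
Your proof is correct and follows the same overall strategy as the paper: reduce to showing $\mf_G(Q,f)$ is Krull--Schmidt by establishing (i) $Q^G$ is henselian local noetherian, (ii) $\mf_G(Q,f)$ is idempotent complete with endomorphism rings module-finite over $Q^G$, and (iii) the Krull--Schmidt property passes to the homotopy-category quotient (after discarding contractible summands). The one genuinely different ingredient is your proof that $Q^G$ is henselian. The paper derives this from a general lemma (Proposition~\ref{newprop}): any module-finite local subring $B$ of a henselian local ring $A$ is henselian, proved by comparing $B$ with $B^h$ inside $A$ and using faithful flatness of completion. You instead verify directly the monic-polynomial form of Hensel's lemma for $Q^G$: given a coprime monic factorization of $\bar g$ over $Q^G/\mathfrak m_{Q^G}$, coprimality persists over the extension field $Q/\mathfrak m$, so there is a unique monic lift $g = hk$ in $Q[t]$; since each $\sigma \in G$ fixes $g$ and the residue factorization, uniqueness forces $\sigma(h) = h$, $\sigma(k) = k$, hence $h, k \in Q^G[t]$. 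This is more elementary and, notably, does not invoke module-finiteness of $Q^G \subseteq Q$ at this step (you still need module-finiteness, together with Eakin--Nagata, to get that $Q^G$ is noetherian and that endomorphism rings are finitely generated $Q^G$-modules). You also inline the fact that a Krull--Schmidt additive category is idempotent complete via a direct inductive argument, where the paper cites Krause. The rest of the structure — idempotent completeness of $\mf_G(Q,f)$ by splitting the idempotent as a $Q\#G$-module and observing the differential restricts, semiperfectness of module-finite algebras over a henselian local ring giving the decomposition into objects with local endomorphism ring, and passage to the quotient category — matches the paper's Lemma~\ref{lem126} and the proof of Theorem~\ref{prop:technical}.
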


Suppose now that $Q$ is regular, $(Q', \fm')$ is another regular local ring with $G$-action, and $\phi: Q \to Q'$ is a $G$-equivariant  homomorphism of local rings.
Setting $f' = \phi(f)$, we have an induced triangulated functor
$$
\phi_*: [\mf_G(Q,f)] \to [\mf_G(Q',f')]
$$
given by extension of scalars along $\phi$. 
Building from the aforementioned result of Dyckerhoff \cite[Lemma 5.6]{dyckerhoff}, we also prove: 
\begin{prop} \label{introprop} Assume $Q/f$ and $Q'/f'$ have isolated singularities, $Q^G \subseteq Q$ and 
$(Q')^G \subseteq Q'$ are module finite, $|G|$ is a unit in $Q$,  $\phi$ is flat, and
the canonical map $Q' \otimes_Q Q/\fm \to Q'/\fm'$ is an isomorphism.  
The  functor $\phi_*$ induces an equivalence of triangulated categories
$$
\phi_*: [\mf_G(Q,f)]^\vee \xra{\cong} [\mf_G(Q',f')]^\vee,
$$
where ${}^\vee$ denotes idempotent completion.
\end{prop}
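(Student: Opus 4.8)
The plan is to reduce the Proposition to the complete case, where the hypotheses force the comparison map to be an isomorphism. First I would pass to completions $\widehat{Q}$ and $\widehat{Q'}$, with induced $G$-equivariant homomorphism $\widehat{\phi}\colon\widehat{Q}\to\widehat{Q'}$ and with $\widehat{f},\widehat{f'}$ the images of $f,f'$. The crucial structural claim is that extension of scalars $[\mf_G(Q,f)]\to[\mf_G(\widehat{Q},\widehat{f})]$ exhibits its target as the idempotent completion of its source, and likewise over $Q'$. Granting this and that $\widehat{\phi}$ is an isomorphism (established below), the composite
$$[\mf_G(Q,f)]^\vee \;\simeq\; [\mf_G(\widehat{Q},\widehat{f})] \;\xra{\widehat{\phi}_*}\; [\mf_G(\widehat{Q'},\widehat{f'})] \;\simeq\; [\mf_G(Q',f')]^\vee$$
is an equivalence, and by construction it is the functor induced by $\phi_*$.

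For the structural claim I would show that $F\colon[\mf_G(Q,f)]\to[\mf_G(\widehat{Q},\widehat{f})]$ is fully faithful with essential image generating the target as a thick subcategory. \emph{Full faithfulness:} matrix factorizations are finitely generated free over the base ring and taking $G$-invariants commutes with the flat base change $Q\to\widehat{Q}$, so the Hom-complex over $\widehat{Q}$ is $\widehat{Q}\otimes_Q(-)$ of the Hom-complex over $Q$; passing to cohomology (flatness again), the morphism groups over $\widehat{Q}$ are $\widehat{Q}\otimes_Q$ applied to the equivariant stable $\Ext$ groups over $Q$, and these are finite-length $Q/f$-modules because $Q/f$ has an isolated singularity (the cohomology of a matrix factorization Hom-complex vanishes after localizing at any prime where the base ring is regular, so it is supported only at $\m$), whence $\widehat{Q}\otimes_Q(-)$ leaves them unchanged. \emph{Idempotent completeness of the target} is Theorem~\ref{main}, since $\widehat{Q}$ is complete, hence henselian. \emph{Generation:} the essential image contains $F(E)$ for $E:=k^{\st}\otimes_\k\k[G]\in\mf_G(Q,f)$, the induction to $G$ of the stabilized residue field, and over $\widehat{Q}$ this object generates $[\mf_G(\widehat{Q},\widehat{f})]$ as a thick subcategory; since $F$ is fully faithful and exact, the target is then identified with $[\mf_G(Q,f)]^\vee$. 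Lastly, that $\widehat{\phi}$ \emph{is an isomorphism:} flatness of $\phi$ and $Q'\otimes_Q Q/\m\xra{\cong}Q'/\m'$ give $\m Q'=\m'$; since $Q,Q'$ are regular local with residue field $\k$ and, by flatness, of the same dimension, $\phi$ carries a regular system of parameters of $Q$ to one of $Q'$, and Cohen's structure theorem upgrades this to an isomorphism $\widehat{\phi}\colon\widehat{Q}\xra{\cong}\widehat{Q'}$ (automatically $G$-equivariant).

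The hard part is the generation input just used: that $E=\mathrm{Ind}(k^{\st})$ generates the equivariant matrix factorization category of the \emph{complete} hypersurface as a thick subcategory. This is the equivariant enhancement of Dyckerhoff's generation theorem \cite{dyckerhoff} and is not a formal consequence of the non-equivariant statement. I would deduce it by setting up the induction/restriction adjunction between $[\mf_G(\widehat{Q},\widehat{f})]$ and $[\mf(\widehat{Q},\widehat{f})]$, proving the projection formula $\mathrm{Ind}\,\mathrm{Res}(M)\cong M\otimes_\k\k[G]$, and using that $|G|$ is a unit to split the trivial summand off the regular representation, so that every $M$ is a summand of $\mathrm{Ind}\,\mathrm{Res}(M)$ and hence lies in the thick subcategory generated by $\mathrm{Ind}(k^{\st})$. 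Two further points require care but are routine: that $\widehat{Q}$ inherits the running hypotheses (for instance module-finiteness over $\widehat{Q}^G$), and that Dyckerhoff's theorem, stated for power series rings, transfers to the abstract complete regular local ring $\widehat{Q}$ via Cohen's theorem.
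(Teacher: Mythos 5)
Your proof is essentially correct but is organized quite differently from the paper's. The paper proves Proposition~\ref{introprop} directly, without passing to completions: it produces a single thick generator $X = E(k^{\st})$ of $[\mf_G(Q,f)]$ (Dyckerhoff's generation result plus the averaging argument of Lemma~\ref{lem2}, which is exactly your Ind/Res projection-formula step), shows the map of endomorphism dg-algebras $\End(X) \to \End(\phi_*X)$ is a quasi-isomorphism using flatness, isolated singularities and the residue-field hypothesis, and then invokes a general dg-category criterion (Lemma~\ref{lem1}) to conclude the induced functor on idempotent completions is an equivalence. You instead factor $\phi$ through the completions, prove the special case $[\mf_G(Q,f)]^\vee \simeq [\mf_G(\widehat Q,\widehat f)]$ by establishing full faithfulness for \emph{all} objects (base change plus finite-length of the stable Ext) together with thick generation and Theorem~\ref{main}, and then observe $\widehat{\phi}$ is an isomorphism of complete regular local rings. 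The underlying ingredients (Dyckerhoff generation, equivariant averaging, finite length from isolated singularity, Theorem~\ref{main}) are the same; what your route buys is a concrete geometric endpoint (the rings literally become isomorphic after completion, via Cohen's theorem), while the paper's route is leaner in that Lemma~\ref{lem1} only requires a quasi-isomorphism on the endomorphism DGA of one generator rather than full faithfulness everywhere, and it handles an arbitrary flat $\phi$ in one step rather than routing through $\widehat{Q}$. One point worth being explicit about: your Cohen's-theorem step silently uses that $\phi$ induces an isomorphism on residue fields (you write ``regular local with residue field $\k$''). The stated hypothesis $Q'\otimes_Q Q/\fm \xra{\cong} Q'/\fm'$ literally only says $\fm Q'=\fm'$; the paper's own proof makes the same tacit use of trivial residue-field extension when it concludes $N \to Q'\otimes_Q N$ is an isomorphism for finite-length $N$, so this is a shared point, but in your argument it surfaces more visibly and deserves a sentence. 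Also, the notation $E := k^{\st}\otimes_\k\k[G]$ should be $(Q\#G)\otimes_Q k^{\st}$; the two agree only after a choice of $Q$-basis.
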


From Theorem~\ref{main} and Proposition~\ref{introprop}, we deduce:

\begin{cor} 
\label{introcor}
If $|G|$ is a unit in $Q$, $Q^G \subseteq Q$ is module finite, and the local hypersurface $Q/f$ has an isolated singularity, then  the canonical functors
$$
[\mf_G(Q, f)]^\vee \xra{} [\mf_G(Q^h, f)] \xra{} [\mf_G(\wQ, f)]
$$
are equivalences of triangulated categories, where $Q^h$ and $\wQ$ are the henselization and $\fm$-adic completion of $Q$, respectively. 
\end{cor}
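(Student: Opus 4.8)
The plan is to derive Corollary~\ref{introcor} by combining Theorem~\ref{main} with Proposition~\ref{introprop}; the only real work is checking that the hypotheses of the Proposition persist along the maps $Q\to Q^h$ and $Q\to\wQ$, together with some bookkeeping about idempotent completions. Since $Q/f$ is a \emph{local hypersurface}, $Q$ is regular, and hence so are $Q^h$ and $\wQ$ (\'etale-locality of regularity, resp.\ the standard fact for completions); moreover both are henselian local rings with residue field $Q/\m$, and the $G$-action on $Q$ extends to each. So Theorem~\ref{main} applies with $Q^h$ or $\wQ$ in place of $Q$: the categories $[\mf_G(Q^h,f)]$ and $[\mf_G(\wQ,f)]$ are already idempotent complete, so the canonical functors to their idempotent completions are equivalences, and we may identify the targets appearing in Proposition~\ref{introprop} with the categories themselves.

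Next I would verify the remaining hypotheses of Proposition~\ref{introprop} for the flat local homomorphisms $\phi\colon Q\to Q^h$ and $\phi'\colon Q\to\wQ$ (with $f'=f$ in each case). That $|G|$ is a unit is inherited. For module-finiteness of invariants: since $\m$ is the unique prime of $Q$ over $\m\cap Q^G$, one has $Q^h\cong Q\otimes_{Q^G}(Q^G)^h$ and $\wQ\cong Q\otimes_{Q^G}\widehat{Q^G}$, and taking $G$-invariants — which commutes with these flat base changes — yields $(Q^h)^G=(Q^G)^h$ and $(\wQ)^G=\widehat{Q^G}$, over which $Q^h$ and $\wQ$ remain module finite (cf.\ Remark~\ref{finiteextension}). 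The maps $\phi,\phi'$ are flat, and $\m$ generates the maximal ideal downstairs, so $Q^h\otimes_Q Q/\m\cong Q^h/\m Q^h$ and $\wQ\otimes_Q Q/\m\cong\wQ/\m\wQ$. Finally, $Q^h/f=(Q/f)^h$ has an isolated singularity because its singular locus is the preimage of $\{\m\}$ under the (ind-\'etale) map $Q/f\to(Q/f)^h$, and $\wQ/f=\widehat{Q/f}$ has one by the analogous argument, using faithful flatness of $Q/f\to\widehat{Q/f}$ together with regularity of its fibers.

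With the hypotheses in place, Proposition~\ref{introprop} furnishes equivalences $\phi_*\colon[\mf_G(Q,f)]^\vee\xra{\cong}[\mf_G(Q^h,f)]^\vee$ and $\phi'_*\colon[\mf_G(Q,f)]^\vee\xra{\cong}[\mf_G(\wQ,f)]^\vee$, which by the above we view as equivalences onto $[\mf_G(Q^h,f)]$ and $[\mf_G(\wQ,f)]$ respectively. Under these identifications the first functor of the Corollary \emph{is} $\phi_*$ — it is the factorization of extension of scalars $[\mf_G(Q,f)]\to[\mf_G(Q^h,f)]$ through the idempotent completion, which exists because the target is idempotent complete — hence it is an equivalence. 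Since extension of scalars along a composite of ring maps is naturally isomorphic to the composite of the two extension-of-scalars functors, the composite of the Corollary's two functors is $\phi'_*$, again an equivalence; therefore the second functor $[\mf_G(Q^h,f)]\to[\mf_G(\wQ,f)]$ is an equivalence as well. I expect the main points requiring care to be the verification that completion preserves ``isolated singularity'' — the one place where geometry, rather than formal nonsense, is used — and the routine but fiddly identification of the Corollary's canonical functors with $\phi_*$ and $\phi'_*$; everything else is immediate from Theorem~\ref{main} and Proposition~\ref{introprop}.
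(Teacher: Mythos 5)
Your proposal is correct and follows essentially the same route as the paper: combine Proposition~\ref{introprop} with Theorem~\ref{main}, using that $Q^h$ and $\widehat{Q}$ are henselian/complete so that the target categories are already idempotent complete. The only minor variations are that the paper simply invokes Proposition~\ref{newprop} for the module-finiteness of $(Q^h)^G\subseteq Q^h$ and $(\widehat{Q})^G\subseteq\widehat{Q}$ (rather than re-deriving the stronger equalities $(Q^h)^G=(Q^G)^h$, $(\widehat{Q})^G=\widehat{Q^G}$ as you do), and the paper applies Proposition~\ref{introprop} to the maps $Q\to Q^h$ and $Q^h\to\widehat{Q}$ directly rather than to $Q\to Q^h$ and $Q\to\widehat{Q}$ followed by a two-out-of-three argument.
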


As another application, we combine Theorem~\ref{main} and a result of Spellmann-Young \cite{SY} to conclude that $[\mf_G(Q, f)]$ is equivalent to the category of $G$-equivariant objects in the triangulated category $[\mf(Q, f)]$; see Corollary~\ref{fixedpoints} below for the precise (and more general)
statement. This gives an analogue of a result of Elagin involving bounded derived categories of equivariant sheaves \cite[Theorem 9.6]{elagin1}. This consequence of the statement
of Theorem~\ref{main} was observed by Spellman-Young  \cite[Remark 3.7]{SY} and provided a main source of motivation for this work.

\subsection*{Acknowledgments} 
We thank Jack Jeffries and Anurag Singh for suggesting the argument in the proof of Proposition~\ref{newprop}, and we are grateful to the anonymous referee for many helpful suggestions.

\section{Background}

\subsection{Twisted group rings} 
Let $A$ be a commutative ring with action of a finite group $G$.

\begin{defn} The {\em twisted group ring}, written  $A \# G$, has underlying set given by formal sums $\sum_{g \in G} a_g g$, with $a_g \in A$ for all $g$, and multiplication determined by the rule
$$
ag \cdot bh = ab^ggh
$$
for $a,b \in A$ and  $g,h \in G$, where $b^g$ is the result of acting by $g$ on $b$.
\end{defn}

The map $A \to A \# G$ sending $a$ to $ae_G$ is a ring homomorphism, but beware that $A \# G$ is only an $A$-algebra when the action of $G$ on $A$ is trivial, in which case $A \#
G$ coincides with the group ring $A[G]$.
In general, letting $A^G$ denote the ring of invariants $\{a \in A \mid a^g = a \text{ for all $g \in G$}\}$, the composition $A^G \into A \to A \# G$ exhibits $A \# G$ as an
$A^G$-algebra.

A left module over $A \# G$ is the same thing as a set $M$ that is equipped with a left $A$-module structure and a left $G$-action such that
$g (a m) = a^g (g m)$ for all $g \in G$, $a \in A$ and $m \in M$.

Suppose $A$ is local with maximal ideal $\m$. Since $G$ is finite, the
inclusion $A^G \into A$ is integral, 
and it is therefore a consequence of the Going Up Theorem that the invariant
ring $A^G$ is also local. 
We observe also that the henselization $A^h$ and $\m$-adic completion
$\widehat{A}$ inherit canonical $G$-actions. In more detail, every
morphism of local rings $\phi: A \to B$ induces unique morphisms on
henselizations $\phi^h: A^h \to B^h$ and completions $\widehat{\phi}:
\widehat{A} \to \widehat{B}$ 
that cause the evident squares to commute. Applying this when $B = A$ and $\phi$ ranges over the isomorphisms determined by the actions of the  group elements of $G$
gives the actions of $G$ on $A^h$ and $\widehat{A}$.

\begin{prop} \label{newprop}
Assume a finite group $G$ acts on a local ring $A$ in such a way that the extension $A^G \subseteq A$ is module finite.  Both of the extensions
  $(A^h)^G \subseteq A^h$ and $(\widehat{A})^G  \subseteq \widehat{A}$ are module finite, $(A^h)^G$ is henselian, and $(\widehat{A})^G$ is complete. 
\end{prop}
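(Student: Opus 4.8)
The plan is to reduce all four assertions to the compatibility of henselization and $\m$-adic completion with the module-finite extension $A^G\subseteq A$: I claim there are $G$-equivariant ring isomorphisms
$$A^h \;\cong\; A\otimes_{A^G}(A^G)^h, \qquad \widehat{A}\;\cong\;A\otimes_{A^G}\widehat{A^G},$$
where $G$ acts through the first factor, i.e.\ trivially on $(A^G)^h$ and $\widehat{A^G}$. Granting these, the proposition follows formally. Indeed $A$ is a finite $A^G$-module, so each right-hand side is a finite module over $(A^G)^h$, resp.\ $\widehat{A^G}$; and since $A^G\to(A^G)^h$ and $A^G\to\widehat{A^G}$ are flat and forming $G$-invariants is an equalizer of $A^G$-linear maps $A\rightrightarrows\bigoplus_{g\in G}A$, flat base change gives
$$(A^h)^G = \big(A\otimes_{A^G}(A^G)^h\big)^G = A^G\otimes_{A^G}(A^G)^h = (A^G)^h,$$
and likewise $(\widehat{A})^G=\widehat{A^G}$. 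Thus $(A^h)^G=(A^G)^h$ is henselian, $(\widehat{A})^G=\widehat{A^G}$ is complete (here using that $A^G$ is noetherian, e.g.\ by the Eakin--Nagata theorem), and $A^h$, $\widehat{A}$ are module finite over them.

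It remains to produce the two isomorphisms. For completion this is elementary: $A/\m_{A^G}A$ is a finite-dimensional vector space over the field $A^G/\m_{A^G}$ and is local (being a quotient of the local ring $A$), so its maximal ideal is nilpotent; hence $\m_A^N\subseteq\m_{A^G}A\subseteq\m_A$ for some $N$, the $\m_A$-adic and $\m_{A^G}$-adic topologies on $A$ coincide, and since completion commutes with finite modules over a noetherian ring, $\widehat{A}$ is the $\m_{A^G}$-adic completion of $A$, which is $A\otimes_{A^G}\widehat{A^G}$. For henselization I would invoke the standard fact that henselization is compatible with finite ring maps: if $A$ is finite over the local ring $A^G$, then $A\otimes_{A^G}(A^G)^h$ is a finite product of henselian local rings, one for each maximal ideal of $A$. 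That this product has a single factor when $A$ is local can also be seen directly: $C:=A\otimes_{A^G}(A^G)^h$ is finite over the henselian local ring $(A^G)^h$, hence a finite product of local rings, while $C/\m_{(A^G)^h}C\cong A/\m_{A^G}A$ is local, forcing one factor; so $C\cong A^h$.

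The $G$-equivariance, with trivial $G$-action on $(A^G)^h$ and $\widehat{A^G}$, is formal. The composite $A^G\to A\to A^h$ is a local homomorphism to a henselian local ring which is constant on $A^G$ and $G$-equivariant, so for each $g\in G$ the unique map $(A^G)^h\to A^h$ extending it is unchanged after post-composing with the automorphism of $A^h$ induced by $g$ (both extend $A^G\to A^h$); unwinding the construction of the comparison map then gives its equivariance, and the same argument works for completion. Alternatively, the henselian part of the proposition can be proved without the compatibility of henselization with finite maps: $A^h$ is integral over $(A^h)^G$ since every $r\in A^h$ satisfies the monic polynomial $\prod_{g\in G}(x-g(r))$ over the invariants, so $(A^h)^G$ is local by lying-over; and if $p\in(A^h)^G[x]$ is monic with a simple root $\bar a$ in $\kappa((A^h)^G)\subseteq\kappa(A^h)$, then the henselian ring $A^h$ has a unique root $a$ of $p$ lifting $\bar a$, and $g(a)$ is another such root, whence $g(a)=a$ and $a\in(A^h)^G$.

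The only nonroutine ingredient is the compatibility of henselization with the finite extension $A^G\subseteq A$; the rest uses only flatness of henselization and completion, the Eakin--Nagata theorem, and the elementary comparison of topologies above.
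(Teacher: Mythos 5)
Your proof is correct, but it takes a genuinely different route from the paper's, and in fact proves something a bit sharper. Both arguments begin from the same key compatibility (the paper cites Stacks Project 10.156.1): for a module-finite local extension $B\subseteq A$ one has $A^h\cong A\otimes_B B^h$ and $\widehat A\cong A\otimes_B\widehat B$. From that point the strategies diverge. The paper only extracts from this that $A^h$ is module finite over $(A^G)^h$ and $\widehat A$ over $\widehat{A^G}$, observes the inclusions $(A^G)^h\subseteq(A^h)^G\subseteq A^h$ and $\widehat{A^G}\subseteq(\widehat A)^G\subseteq\widehat A$ to get module finiteness of the middle extensions, and then proves a separate general lemma --- that a local ring which is a module-finite subring of a henselian (resp.\ complete) local ring is itself henselian (resp.\ complete), via a faithfully-flat argument with $\widehat B\otimes_B(\widehat B/B)=0$ --- which it applies to $(A^h)^G\subseteq A^h$ and $(\widehat A)^G\subseteq\widehat A$. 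You instead upgrade the tensor formulas to $G$-equivariant isomorphisms (with trivial action on the second factor), which requires the same uniqueness-of-the-map observation the paper uses implicitly for the inclusion $(A^G)^h\subseteq(A^h)^G$, and then compute invariants by flat base change of the equalizer $A^G=\ker(A\rightrightarrows\bigoplus_{g}A)$ to conclude the exact equalities $(A^h)^G=(A^G)^h$ and $(\widehat A)^G=\widehat{A^G}$, from which all four claims drop out at once. This is cleaner in that it bypasses the paper's descent lemma entirely and yields equality rather than mere containment; both arguments make the same implicit use of the Eakin--Nagata theorem (or some equivalent) to ensure $A^G$ is noetherian so that $\widehat{A^G}$ is flat over $A^G$. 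Your ``alternative'' direct verification of henselianity of $(A^h)^G$ via lifting simple roots and uniqueness is also correct and is closer in spirit to the paper's descent lemma, though still distinct from it.
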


\begin{rem}
\label{finiteextension}
The assumption that the extension $A^G\subseteq A$ is module finite holds in many cases of interest.
For instance, if $A$ is a finite type $F$-algebra for a field $F$ contained in $A^G$, then, since $A^G \subseteq A$ is integral and finite type, it is module finite. 
More generally, if $A$ is any equivariant localization of an example of this kind,  then $A^G \subseteq A$ is module finite.  

Additionally, if $A$ is a noetherian domain, and $|G|$ is invertible in $A$, then $A^G \subseteq A$ is
module finite \cite[Proposition 5.4]{LW}. More generally, if $A$ is any equivariant quotient of an example of this kind, 
then $A^G \subseteq A$ is module finite. 

Indeed, we know of no examples where  $A^G \subseteq A$ fails to be module finite when $|G|$ is invertible in $A$, but there are examples of such failure
when $A = F[[x,y]]$, $F$ is a field of infinite transcendence degree over the field with $p$ elements for a prime  $p$, and $G$ is cyclic of order $p$; see \cite{GS}.
\end{rem}

\begin{proof}[Proof of Proposition~\ref{newprop}]
  For any module finite extension of local rings  $B \subseteq A$,
 the induced map $B^h \subseteq A^h$ is also a module finite extension, and the canonical map $B^h \otimes_B A \xra{\cong} A^h$ is an isomorphism \cite[Lemma 10.156.1]{stacksproject}.
  Applying this when $B = A^G$, and using that $(A^G)^h \subseteq (A^h)^G \subseteq A^h$, we obtain the first result. Similarly, we have $\widehat{A} \cong \widehat{A^G} \otimes_{A^G} A$ is module finite over $\widehat{A^G}$,
  and $\widehat{A^G} \subseteq (\widehat{A})^G \subseteq \widehat{A}$, hence $(\widehat{A})^G  \subseteq \widehat{A}$ is module finite.

 Let us  return to the general setting of a module finite extension of local rings
  $B \subseteq A$.  If $A$ is complete, then $B$ is complete. To see this, note that $\widehat{B} \otimes_B A \cong \widehat{A}  = A$, and hence $B \subseteq \widehat{B} \subseteq A$,
  so that $\widehat{B}$ is a module finite extension of $B$. Thus, $\widehat{B} \otimes_B (\widehat{B}/B)  = 0$. Since $\widehat{B}$ is a faithfully flat $B$-module, it follows that $B = \widehat{B}$. Likewise, if $A$ is henselian, so is $B$. Indeed, we have $B \subseteq B^h \subseteq A$, since $B^h \subseteq A^h$ and $A = A^h$. Thus, $B^h$ is module finite over $B$.
Taking completions, and using that $\widehat{(B^h)} \cong \widehat{B}$, we get $\widehat{B} \otimes_B B^h/B = 0$, and hence $B = B^h$. 
  \end{proof}

\section{Proof of the Main Theorem}
\label{mf}

Let us first recall the notion of an idempotent complete additive category:

\begin{defn}
\label{def:idem}
An additive category $A$ is \emph{idempotent complete} if, given an object $X$ in $A$ and an idempotent endomorphism $e$ of $X$, there exists an object $Y$ in $A$ and morphisms $\pi \colon X \to Y$, $\iota \colon Y \to X$ such that $\pi \iota = \id_Y$ and $\iota \pi = e$. The \emph{idempotent completion of $A$} is, roughly speaking, the smallest idempotent complete additive category containing $A$. More precisely: the idempotent completion $A^\vee$ of $A$ is the category with objects given by all pairs $(X, e)$, where $X$ is an object in $A$ and $e$ is an idempotent endomorphism of $X$. A morphism $(X, e) \to (X', e')$ in $A^\vee$ is a morphism $f \colon X \to X'$ in $A$ such that $e' f = fe = f$.
\end{defn}

Recall that a not-necessarily-commutative ring $E$ is called {\em nc local} if $E/J(E)$ is a division ring, where $J(E)$ denotes the Jacobson radical of $E$ (i.e., the intersection of all the
maximal left ideals of $R$). An additive category $A$ is called {\em Krull-Schmidt} 
if every object is a finite direct sum of objects with nc local endomorphism rings.
By a result of Krause \cite[Corollary 4.4]{krause}, every Krull-Schmidt additive category is  idempotent complete. We observe that if $A$ is a Krull-Schmidt additive category and $B$ is a quotient of $A$,
by which we mean $B$ has the same objects of $A$ and the hom groups of $B$ are quotients of the hom groups of $A$, then $B$ is also Krull-Schmidt.
In particular, any quotient of a Krull-Schmidt additive category is idempotent complete.

\begin{lem} \label{lem126} Suppose $A$ is an $R$-linear additive category, where $R$ is a henselian (local noetherian) ring.
  If $A$ is idempotent complete, and the endomorphism ring of every object of $A$ is finitely generated as an $R$-module,  then
 $A$ is Krull-Schmidt.
\end{lem}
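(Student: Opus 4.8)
The plan is to show every object of $A$ decomposes as a finite direct sum of objects with nc local endomorphism rings. Fix an object $X$ and let $E = \End_A(X)$, a (possibly noncommutative) $R$-algebra that is finitely generated as an $R$-module. The key algebraic input is a classical theorem on such algebras: if $R$ is henselian local noetherian and $E$ is an $R$-algebra finite as an $R$-module, then $E$ is \emph{semiperfect}, meaning $E/J(E)$ is semisimple artinian and idempotents lift along $E \to E/J(E)$. (The relevant fact is that $J(E) \supseteq \m E$ where $\m$ is the maximal ideal of $R$, so $E/\m E$ is a finite-dimensional algebra over the field $R/\m$, hence semiperfect; and idempotent lifting along the nilpotent-ish, actually $\m$-adically complete in the henselian case, map $E \to E/\m E$ works because $R$ — hence $E$ — is henselian. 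One cites, e.g., the standard characterization of semiperfect rings via lifting of idempotents, or directly invokes that finite algebras over henselian local rings are semiperfect.)

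Next I would convert semiperfectness of $E$ into a direct-sum decomposition of $X$ inside $A$. Since $E/J(E)$ is semisimple artinian, write $1 = \bar e_1 + \cdots + \bar e_n$ as a sum of orthogonal primitive idempotents in $E/J(E)$. Using idempotent lifting (and the standard refinement that a complete set of orthogonal idempotents lifts to a complete set of orthogonal idempotents), obtain orthogonal primitive idempotents $e_1, \dots, e_n \in E$ with $e_1 + \cdots + e_n = \id_X$. Because $A$ is idempotent complete, each $e_i$ splits: there are objects $X_i$ with $X_i$ a retract of $X$ cut out by $e_i$, and the orthogonality and completeness of the $e_i$ give $X \cong X_1 \oplus \cdots \oplus X_n$ in $A$. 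It remains to check $\End_A(X_i)$ is nc local. One computes $\End_A(X_i) \cong e_i E e_i$, and since $e_i$ is a primitive idempotent in the semiperfect ring $E$, the corner ring $e_i E e_i$ is local in the noncommutative sense: its Jacobson radical is $e_i J(E) e_i$ and the quotient $e_i E e_i / e_i J(E) e_i \cong \bar e_i (E/J(E)) \bar e_i$ is a division ring because $\bar e_i$ is primitive in the semisimple artinian ring $E/J(E)$.

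The main obstacle is the idempotent-lifting step — specifically, making sure that the classical commutative-algebra fact "finite algebras over a henselian local ring are semiperfect" is available and correctly invoked, including the lifting of an entire orthogonal family rather than a single idempotent. The henselian hypothesis on $R$ is exactly what powers this: for a module-finite $R$-algebra $E$, the finite commutative subalgebra $R[e]$ generated by any idempotent-mod-$J(E)$ element is a product of henselian local rings, which is what lets idempotents lift; alternatively $E$ is henselian along $\m E$ in the appropriate sense. Once that is in hand, everything else — splitting idempotents via idempotent completeness, and identifying corner rings as nc local — is formal. Note finiteness of $\End_A(X)$ over $R$ is used twice: to guarantee $E$ is semiperfect, and implicitly to ensure the decomposition of $X$ is finite (the number $n$ of primitive summands is bounded by the length of $E/\m E$ over $R/\m$).
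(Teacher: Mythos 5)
Your proposal is correct, and while it reaches the same destination as the paper it is organized in a genuinely different way. The paper's proof proceeds ``object first'': it uses noetherianity of the endomorphism rings to argue that every object decomposes as a finite direct sum of indecomposables, then fixes an indecomposable $X$ and shows $E = \End_A(X)$ is nc local by observing that indecomposability plus idempotent completeness force $E$ to have no nontrivial idempotents, that idempotent lifting (valid since $R$ is henselian and $E$ is module finite over $R$) then forces $E/J(E)$ to have no nontrivial idempotents either, and that $\m_R E \subseteq J(E)$ makes $E/J(E)$ a finite-dimensional algebra over $R/\m_R$, hence artinian semisimple, hence a division ring. You instead go ``ring first'': you show directly that $E = \End_A(X)$ is semiperfect for an arbitrary $X$, lift a complete orthogonal family of primitive idempotents from $E/J(E)$ to $E$, split them in $A$ via idempotent completeness to obtain $X \cong X_1 \oplus \cdots \oplus X_n$, and identify each $\End_A(X_i) \cong e_i E e_i$ as nc local because $e_i$ is primitive in the semiperfect ring $E$. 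The algebraic inputs (henselian idempotent lifting for module-finite algebras, $\m_R E \subseteq J(E)$, finite algebras over a field being artinian semisimple) are identical. What your version buys is that the finite decomposition is produced directly by the idempotent lifting, with the number of summands bounded by the length of $E/\m_R E$, so you avoid the separate chain-condition argument the paper uses to get finite indecomposable decompositions; the paper's version is shorter on the page since it leans on a citation to Leuschke--Wiegand for exactly that preliminary step.
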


\begin{proof} Our argument follows the proof of \cite[1.8]{LW}.
The assumptions imply that the endomorphism ring of every object is noetherian, 
and it follows that every object of $A$ is a finite direct sum of   indecomposable objects.
  Given any indecomposable object $X$ of $A$, set  $E \coloneqq \End_A(X)$.
  By assumption, $E$ is a module finite $R$-algebra, and so,  since $R$ is henselian, every idempotent of $E/J(E)$ lifts to an idempotent of $E$ \cite[A.30]{LW}.
  Since $X$ is indecomposable and $A$ is idempotent complete, $E$ has no nontrivial idempotents. We conclude that $E/J(E)$ has no nontrivial idempotents.
Again using that $E$ is module finite over $R$, by \cite[1.7]{LW} we have $\fm_R E\subset J(E)$ and thus $E/J(E)$ is a module finite algebra over the field
$R/\fm_R$. This shows $E/J(E)$ is artinian and hence semi-simple. Since it has no nontrivial idempotents, it must be a division ring.
\end{proof}

For a group $G$ acting on a commutative ring $Q$ and an element  $f \in Q$ fixed by the action, 
we write $\mf_G(Q,f)$ for  the additive category of equivariant matrix factorizations.
Objects are pairs $P = (P, d)$ with $P$ a $\Z/2$-graded module over the twisted group ring $Q \# G$ that is finitely generated and projective as a $Q$-module
and $d$ a $Q \# G$-linear endomorphism of $P$ of odd degree that squares to $f \cdot \id_P$. (We do not assume $|G|$ is a unit in $Q$ here; if it is, then such a $P$ is finitely generated and projective
as a module over $Q \# G$.) 
We write $[\mf_G(Q,f)]$ for the quotient of $\mf_G(Q,f)$ obtained by modding out by homotopy in the usual sense.

Our main result, Theorem~\ref{main}, is an immediate consequence of the following slightly stronger statement:

\begin{thm}
\label{prop:technical}
 Let $G$ be a finite group acting on a commutative ring $Q$, and assume $f \in Q$ is fixed by $G$. 
If $Q$ is (local noetherian) henselian, and the ring extension $Q^G \into Q$ is module finite, then $[\mf_G(Q,f)]$ is a Krull-Schmidt category and hence idempotent  complete. 
\end{thm}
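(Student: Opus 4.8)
The strategy is to apply Lemma~\ref{lem126} with $R = Q^G$ and $A = [\mf_G(Q,f)]$, so the crux is to verify the two hypotheses of that lemma: that $[\mf_G(Q,f)]$ is idempotent complete to begin with, and that the endomorphism ring of each object is a finitely generated $Q^G$-module. The latter is the easier point: for an equivariant matrix factorization $P = (P,d)$, the module $P$ is finitely generated projective over $Q$, hence $\End_{Q\#G}(P)$ is a $Q\#G$-submodule (in fact a subquotient) of $\End_Q(P)$, which is finitely generated over $Q$; since $Q$ itself is module finite over $Q^G$ by hypothesis, $\End_Q(P)$ and therefore $\End_{Q\#G}(P)$ are finitely generated over $Q^G$. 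Passing to the homotopy category only takes a further quotient of these Hom-groups, so $\End_{[\mf_G(Q,f)]}(P)$ remains finitely generated over $Q^G$. Also $Q^G$ is local noetherian (Going Up, as recalled in the text) and henselian (Proposition~\ref{newprop} applied to the trivial group, or rather: $Q$ henselian forces $Q^G$ henselian by the module-finite descent argument in the proof of that proposition).

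The real work is showing $[\mf_G(Q,f)]$ is idempotent complete. Here I would invoke the observation made just before Lemma~\ref{lem126}: any quotient of a Krull--Schmidt additive category is again Krull--Schmidt and hence idempotent complete. So it suffices to exhibit $[\mf_G(Q,f)]$ as a quotient (same objects, Hom-groups are quotients) of some additive category that is visibly Krull--Schmidt. The natural candidate is the additive category $\mf_G(Q,f)$ itself, before passing to homotopy: its objects have endomorphism rings $\End_{Q\#G}(P)$ which, as noted, are module finite over the henselian local ring $Q^G$. A module-finite algebra over a henselian local ring is semiperfect, so idempotents lift from the quotient by the Jacobson radical and every finitely generated projective decomposes into indecomposables with nc local endomorphism rings; applying this to $P$ as an object of $\mf_G(Q,f)$ shows $\mf_G(Q,f)$ is Krull--Schmidt. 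One should be slightly careful that $\mf_G(Q,f)$ is genuinely additive and that a decomposition of $P$ as a $\Z/2$-graded $Q\#G$-module respecting $d$ is what is produced — but idempotents in $\End_{Q\#G}(P)$ (the degree-zero $d$-commuting endomorphisms) are exactly what split objects in $\mf_G(Q,f)$, so the lifting statement is precisely what is needed. Then $[\mf_G(Q,f)]$, being a quotient of $\mf_G(Q,f)$, is Krull--Schmidt, and by Krause's theorem idempotent complete.

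Having established both hypotheses, Lemma~\ref{lem126} applied to the $Q^G$-linear category $[\mf_G(Q,f)]$ — which we now know is idempotent complete with module-finite endomorphism rings over the henselian ring $Q^G$ — yields that $[\mf_G(Q,f)]$ is Krull--Schmidt. (Strictly, Lemma~\ref{lem126} is only being used to re-derive Krull--Schmidt from idempotent completeness plus finiteness; since we already obtained Krull--Schmidt directly via the quotient argument, one could also just cite that and skip Lemma~\ref{lem126} entirely. I would present whichever is cleaner — probably the direct quotient argument, reserving Lemma~\ref{lem126} as the conceptual explanation.) Either way, idempotent completeness follows, and since Theorem~\ref{main} is the special case where additionally $Q$ is assumed henselian with the module-finite hypothesis, Theorem~\ref{main} drops out immediately.

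\textbf{Main obstacle.} The one step deserving genuine care is the passage from "idempotents lift along $\End \to \End/J$ for module-finite algebras over henselian local rings" to "$\mf_G(Q,f)$ is Krull--Schmidt": one must check that the relevant endomorphism ring $\End_{Q\#G}(P)$ in $\mf_G(Q,f)$ (even-degree, commuting with $d$) is the one to which semiperfectness applies, that it is indeed noetherian hence supports a finite indecomposable decomposition of the identity, and that its local pieces give objects of $\mf_G(Q,f)$ with nc local endomorphism rings — i.e. that the category-theoretic notion of Krull--Schmidt matches the ring-theoretic semiperfect decomposition. This is routine but is where all the hypotheses ($Q^G$ henselian, $Q^G \subseteq Q$ module finite, finite $G$) are actually consumed, so it is the heart of the proof.
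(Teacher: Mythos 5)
Your overall strategy is the same as the paper's: reduce to showing that the \emph{strict} category $\mf_G(Q,f)$ (before passing to homotopy) is Krull--Schmidt, so that its quotient $[\mf_G(Q,f)]$ is Krull--Schmidt and hence idempotent complete by Krause's theorem. The $Q^G$-linearity, the module-finiteness of $\End(P)$ over $Q^G$, and the fact that $Q^G$ is henselian (Proposition~\ref{newprop}) are all correctly assembled, just as in the paper.

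However, there is a genuine gap at the step you flag as the ``heart of the proof'' and then wave off as routine. You argue that since $\End_{\mf_G(Q,f)}(P)$ is a module-finite algebra over the henselian local ring $Q^G$, it is semiperfect, and you conclude that $\mf_G(Q,f)$ is Krull--Schmidt. But semiperfectness of the endomorphism ring (lifting idempotents modulo the Jacobson radical, existence of a complete orthogonal set of primitive idempotents in $E=\End(P)$) does \emph{not} by itself yield that $P$ decomposes in the category $\mf_G(Q,f)$: one still needs those idempotents to \emph{split}, i.e.\ to correspond to actual summand objects of $\mf_G(Q,f)$. That is precisely idempotent completeness of $\mf_G(Q,f)$, and without it the inference ``semiperfect $\End(P)$ for all $P$ $\Rightarrow$ Krull--Schmidt'' fails (an indecomposable $P$ could have $\End(P)$ with nontrivial idempotents). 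Indeed the paper's Lemma~\ref{lem126} takes idempotent completeness as a hypothesis for exactly this reason, and your invocation of ``the lifting statement'' conflates two distinct things: lifting idempotents along $E\to E/J(E)$, versus splitting idempotents of $E$ inside the category. Only the latter gives a decomposition of $P$.

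What is missing, and what the paper supplies, is the short direct verification that $\mf_G(Q,f)$ is idempotent complete: given an idempotent $e$ of $(P,d)$, the $Q\#G$-module $P$ splits as $\ker(e)\oplus\im(e)$; both pieces are $Q$-module summands of the finitely generated projective $Q$-module $P$ and hence themselves finitely generated projective over $Q$; and since $e$ commutes with $d$, the differential restricts to each piece. Therefore $(\ker e, d|_{\ker e})$ and $(\im e, d|_{\im e})$ are honest objects of $\mf_G(Q,f)$ and $e$ splits. With this in hand, one then applies Lemma~\ref{lem126} (or, equivalently, your semiperfectness reasoning, which is essentially the content of that lemma's proof) to conclude $\mf_G(Q,f)$ is Krull--Schmidt. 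Your proposal should include this idempotent-splitting argument explicitly rather than folding it into the lemma's conclusion.
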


\begin{proof} 
  Since $Q$ is henselian, $Q^G$ is also henselian by Proposition \ref{newprop}.
  Since $Q^G$ belongs to the center of $Q \# G$, the additive category $\mf_G(Q,f)$ is $Q^G$-linear.
    The endomorphism ring of every object $P$ of $\mf_G(Q,f)$ is contained in $\End_Q(P)$ and hence is module finite over $Q^G$. 
So,    since $[\mf_G(Q,f)]$ is a quotient of $\mf_G(Q,f)$, by Lemma \ref{lem126} it suffices to prove   $\mf_G(Q,f)$ is idempotent complete.

  Let $e$ be an idempotent endomorphism of an object $(P, d)$ in $\mf_G(Q,f)$.
 The category of all modules over $Q \# G$ is certainly idempotent complete, and so 
$P$ decomposes as  $P = \ker(e) \oplus \im(e)$ over this ring. 
  Since $P$ is $Q$-projective, so are both $\ker(e)$ and $\im(e)$.
  Since $e$ commutes with $d$, we have $d(\ker(e)) \subseteq \ker(e)$ and $d(\im(e)) \subseteq \im(e)$.
  Thus, $(\ker(e), d|_{\ker(e)})$ and $(\im(e), d|_{\im(e)})$ are objects of $\mf_G(Q,f)$,
and the canonical maps 
$p: (P, d) \onto (\im(e), d|_{\im(e)})$  and $i: (\im(e), d|_{\im(e)}) \into (P,d)$ are morphisms in $\mf_G(Q,f)$.
Since $e = i \circ p$,  this proves $\mf_G(Q,f)$  is idempotent complete.
\end{proof}

\section{Proofs of Proposition~\ref{introprop} and Corollary~\ref{introcor}}

Recall that, if $\cT$ is a triangulated category, a subcategory $\cS$ of $\cT$ is called {\em thick}
if $\cS$ is full, triangulated, and closed under summands. 
Given a collection $\cX$ of objects of $\cT$, the {\em thick closure} of $\cX$ in $\cT$, written $\Thick_{\cT}(\cX)$, is the intersection of all thick subcategories
of $\cT$ that contain $\cX$.  Let us say that an object $X$ of $\cT$ {\em builds} $\cT$ if $\Thick_{\cT}(\{X\}) = \cT$. Concretely, this means that every object of $\cT$ is obtained
from $X$ by a finite process of taking mapping cones, suspensions, and summands. 

Given a dg-category $\cC$, we write $[\cC]$ for its homotopy category, which has the same objects as $\cC$ and morphisms $\Hom_{[C]}(X, Y) \coloneqq H^0 \Hom_C(X,Y)$. 
We say $\cC$ is {\em pre-triangulated} if the image of the dg-Yoneda embedding $[\cC] \into [\Moddg(\cC)]$ is a triangulated subcategory of $[\Moddg(C)]$. See, e.g., \cite [Section 2.3]{Orlov}
for more details; roughly this means that $\cC$ has notions of suspension and mapping cone making  $[\cC]$ into a triangulated category. For example, the dg-category $\mf(Q,f)$ is pre-triangulated.

We use the following well-known fact:

\begin{lem} \label{lem1}
Suppose $\phi: \cC \to \cD$ is a dg-functor between two pre-triangulated dg-categories. Assume there exists an object $X \in \cC$  such that
\begin{itemize}
\item $X$ builds $[\cC]$,
  \item $\phi(X)$ builds $[\cD]$, and
\item the map $\phi: \End_{\cC}(X) \to \End_{\cC}(\phi(X))$ of dga's is a quasi-isomorphism.
\end{itemize}
The  dg-functor $\phi$ induces an equivalence $[\cC]^\vee \xra{\cong} [\cD]^\vee$ on idempotent completions of the associated homotopy categories. 
\end{lem}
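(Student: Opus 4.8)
The plan is to reduce the statement to a known fact about dg-modules, using the hypotheses to identify both homotopy categories (up to idempotent completion) with the perfect derived category of a single dga, namely $E \coloneqq \End_\cC(X)$. First I would observe that, since $\cC$ is pre-triangulated, restriction of dg-modules along the inclusion of the full dg-subcategory on the single object $X$ gives a functor $[\Moddg(\cC)] \to [\Moddg(E)]$, and the composite $[\cC] \into [\Moddg(\cC)] \to [\Moddg(E)]$ lands in $[\Perfdg(E)]$, the thick closure of the free rank-one module $E$. The key input is the standard fact (a form of Morita theory for dg-categories) that, because $X$ builds $[\cC]$, this composite induces an equivalence $[\cC]^\vee \xra{\cong} [\Perfdg(E)]$: one direction is that $X \mapsto E$ is fully faithful by construction (the mapping complexes agree on the nose) and lands in the perfect modules, and surjectivity onto a thick generating set upgrades, after idempotent completion, to an equivalence. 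The same reasoning applied to $\cD$ and $\phi(X)$, whose endomorphism dga I will call $E'$, gives $[\cD]^\vee \xra{\cong} [\Perfdg(E')]$.

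Next I would use the third hypothesis: the dga map $\phi\colon E \to E'$ is a quasi-isomorphism. Extension of scalars $-\Lotimes_E E'$ then induces an equivalence $[\Perfdg(E)] \xra{\cong} [\Perfdg(E')]$ — this is the usual invariance of the perfect derived category under quasi-isomorphism of dgas, and it already respects idempotent completeness since both categories are idempotent complete (being thick subcategories of the idempotent-complete derived categories of modules). Finally I would check that the square relating $\phi$, the two Morita equivalences, and the extension-of-scalars functor commutes up to natural isomorphism; this is essentially formal, since all four functors are determined by where they send the distinguished generator ($X$, $\phi(X)$, $E$, $E'$) together with the identification of endomorphism complexes. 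Chasing the generator around the square and invoking two-out-of-three for equivalences then yields that $\phi$ induces an equivalence $[\cC]^\vee \xra{\cong} [\cD]^\vee$.

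The main obstacle, and the step I would be most careful about, is the Morita-type identification $[\cC]^\vee \simeq [\Perfdg(E)]$ from the hypothesis that $X$ builds $[\cC]$: one must be precise that "builds" in the sense defined in the excerpt (finite cones, suspensions, and summands) is exactly what is needed for the essential image of $[\cC]$ in $[\Perfdg(E)]$ to be a thick subcategory containing the generator $E$, hence all of $[\Perfdg(E)]$, and that passing to idempotent completions is what removes the discrepancy coming from the summand operation. It would be cleanest to cite the relevant statement from the dg-categories literature (e.g.\ \cite{Orlov} or a standard reference on dg Morita theory) rather than reprove it, since the lemma is flagged as "well-known"; the remaining verifications — pre-triangulatedness giving the module-category embedding, invariance of $\Perfdg$ under quasi-isomorphism, and commutativity of the comparison square — are then routine.
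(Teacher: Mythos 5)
Your proposal is correct and follows essentially the same route as the paper: both identify $[\cC]^\vee$ and $[\cD]^\vee$ with the perfect derived categories of the endomorphism dgas $\End_\cC(X)$ and $\End_\cD(\phi(X))$ via the Morita-type equivalence coming from the thick-generation hypotheses (the paper cites \cite[Proposition 2.7]{Orlov} for this), then transport the quasi-isomorphism of dgas to an equivalence of $\Perf$ categories, and conclude by commutativity of the comparison diagram. The paper's diagram has an extra middle column ($\Perf(\cC)$ and $\Perf(\cD)$) making the commutativity check slightly more transparent, but this is a presentational difference rather than a distinct argument.
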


\begin{proof}

  This essentially follows from \cite[Proposition 2.7]{Orlov}. In more detail: given a pre-triangulated dg-category $\cA$, let $\Perf(\cA)$ denote the (triangulated) homotopy category of the dg-category of perfect right $\cA$-modules; see e.g. \cite[Definition 2.3]{Orlov} and the surrounding discussion for additional background. As stated in e.g. \cite[\S 2.3]{Orlov}, the Yoneda embedding $[\cA] \into \Perf(\cA)$ exhibits $\Perf(\cA)$ as the idempotent completion of $[\cA]$. 
  
  We have the the following commutative diagram of triangulated categories:
  $$
  \xymatrix{
      \Perf(\End_\cC(X)) \ar[r]^-{\cong} \ar[d]^\cong & \Perf(\cC) \ar[d] &\ar[l]_-{\cong} [\cC]^\vee \ar[d] \\
      \Perf(\End_\cD(\phi(X))) \ar[r]^-{\cong} & \Perf(\cD) & \ar[l]_-{\cong} [\cD]^\vee;
    }
    $$
    the vertical maps are induced by $\phi$, the leftmost horizontal maps are induced by inclusions, and the rightmost maps are induced by the Yoneda embeddings. The rightmost horizontal maps are equivalences by the above discussion; since we assume $X$ builds $\cC$ and $\phi(X)$ builds $\cD$, \cite[Proposition 2.7]{Orlov} implies that the leftmost horizontal functors are equivalences as well.
 The leftmost vertical map is an equivalence since we assume $\End_\cC(X) \to \End_\cD(\phi(X))$ is a quasi-isomorphism.
Thus, the rightmost vertical map is an equivalence. 
  \end{proof}

Let $F: \mf_G(Q, f) \to \mf(Q,f)$ be the evident dg-functor that forgets the group action.
Since $Q \# G$ is free of finite rank as a $Q$-module, given $(P, d) \in \mf(Q,f)$, the pair $((Q \# G) \otimes_Q P, \id \otimes d)$ is an object of $\mf_G(Q,f)$. 
We extend this to a rule on morphisms in the evident way to obtain a dg-functor $E: \mf(Q, f) \to \mf_G(Q,f)$.

\begin{lem} \label{lem2} Assume $|G|$ is a unit in $Q$. Given $P \in \mf(Q,f)$, if $P$ builds $[\mf(Q,f)]$, then $E(P)$ builds $[\mf_G(Q,f)]$.
\end{lem}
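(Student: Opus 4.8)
The plan is to exploit the adjunction between $F$ and $E$ to transfer the building property from $[\mf(Q,f)]$ to $[\mf_G(Q,f)]$. First I would observe that, since $Q\#G$ is free of finite rank over $Q$, restriction of scalars $F$ and the induced-module functor $E$ descend to the homotopy categories and that $E$ is both left and right adjoint (up to a twist) to $F$ at the level of the underlying triangulated categories; in particular, for any $N \in [\mf_G(Q,f)]$ the counit $E F N \to N$ is a split surjection precisely because $|G|$ is a unit in $Q$, the usual averaging idempotent $\frac{1}{|G|}\sum_{g\in G} g$ furnishing the splitting. Concretely, $EFN \cong (Q\#G)\otimes_Q N$ as a $Q\#G$-module with differential $\id\otimes d_N$, and the map $a\otimes n \mapsto a\cdot n$ is a $Q\#G$-linear chain map admitting the section $n \mapsto \frac{1}{|G|}\sum_g g^{-1}\otimes g n$; hence $N$ is a direct summand of $EFN$ in $[\mf_G(Q,f)]$.

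Next I would reduce the claim to showing that $E(P)$ builds every object of the form $E(M)$ for $M \in [\mf(Q,f)]$. Given an arbitrary $N \in [\mf_G(Q,f)]$, we have $FN \in [\mf(Q,f)]$, and by hypothesis $P$ builds $[\mf(Q,f)]$, so $FN$ lies in $\Thick_{[\mf(Q,f)]}(\{P\})$; applying the exact functor $E$ and using that thick closures are preserved by exact functors, $E(FN) = EFN$ lies in $\Thick_{[\mf_G(Q,f)]}(\{E(P)\})$. Since $N$ is a summand of $EFN$ by the previous paragraph, and $\Thick_{[\mf_G(Q,f)]}(\{E(P)\})$ is closed under summands by definition of thickness, we conclude $N \in \Thick_{[\mf_G(Q,f)]}(\{E(P)\})$. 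As $N$ was arbitrary, $E(P)$ builds $[\mf_G(Q,f)]$.

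The main technical point to verify carefully is that $E$ is exact (triangulated) on homotopy categories — that it commutes with suspension and sends mapping-cone triangles to mapping-cone triangles. This is where I expect the only real work to lie, though it is genuinely routine: $E$ is additive, commutes with the shift $(P,d)\mapsto (P[1],-d)$ on the nose, and since $(Q\#G)\otimes_Q(-)$ is an exact functor on $Q$-modules it takes the cone of a chain map $\phi\colon (P,d)\to(P',d')$ to the cone of $E(\phi)$, compatibly with the structure triangles; more abstractly one can invoke that $E$ is a dg-functor between pre-triangulated dg-categories, which automatically induces a triangulated functor on homotopy categories. The other ingredient, that $N$ is a summand of $EFN$, uses the hypothesis $|G|\in Q^\times$ essentially, as without it the averaging section does not exist. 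Everything else is a formal manipulation with thick subcategories.
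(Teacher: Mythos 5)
Your proof is correct and follows essentially the same route as the paper's: you reduce to showing $N$ is a summand of $EFN$, produce the splitting of the multiplication map $(Q\#G)\otimes_Q N \to N$ via the averaging section $n \mapsto \frac{1}{|G|}\sum_g g^{-1}\otimes gn$, and then apply the triangulated functor $E$ to the thick closure of $P$. The paper works directly with the explicit maps rather than invoking the induction/restriction adjunction, but the computation and the structure of the argument are identical.
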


\begin{proof} Given objects $X$ and $Y$ in a triangulated category $\cT$, we use the notation $X \builds_\cT Y$ as a shorthand for ``$X$ builds $Y$ (in $\cT$)".
The goal is to prove $E(P) \builds_{[\mf_G(Q,f)]} Y$ for all $Y \in [\mf_G(Q,f)]$. 
  For any such $Y$,  by assumption we have $P \builds_{[\mf(Q,f)]} F(Y)$. Since $E$ induces a triangulated functor on homotopy categories, it follows that
  $E(P) \builds_{[\mf_G(Q,f)]} E(F(Y))$. It therefore suffices to prove
  $E(F(Y)) \builds_{[\mf_G(Q,f)]} Y$; in fact, we show $Y$ is a summand of $E(F(Y))$ in $\mf_G(Q,f)$. 

The object $E(F(Y))$ has underlying module $(Q \# G) \otimes_Q Y$, with $G$-action through the left tensor factor (and the $G$ action on $Y$ ignored) and differential $\id \otimes d_Y$.
  There is an evident surjection $p: E(F(Y)) \onto Y$ in $\mf_G(Q,f)$ given by multiplication. Define
  $j: Y \into E(F(Y))$ by $j(y) = \frac{1}{|G|} \sum_{g \in G} g^{-1} \otimes y^g$. 
  One readily verifies that (a) $j$ is $Q \# G$ linear,
  (b) $j$ commutes with the differentials, and (c) $p \circ j = \id_Y$, so that $j$ is a splitting of $p$ in $\mf_G(Q,f)$. 
  \end{proof}

  \begin{proof}[Proof of Proposition \ref{introprop}]
Let $k = Q/\fm$ be the residue field of $Q$. For a sufficiently high $Q/f$-syzygy $M$ of $k$, we have that $M$ is a maximal Cohen-Macaulay (MCM) $Q/f$-module. By a Theorem of Eisenbud \cite{eisenbud}, the MCM module $M$ determines an object in $\mf(Q,f)$; let $k^\stab$ be such a matrix factorization.
(We note that the object $k^\stab$ depends on $M$ only up to a shift in $[\mf(Q, f)]$.)
Since  $Q/f$ has an isolated singularity, it follows from \cite[Corollary 4.12]{dyckerhoff} that $k^\stab$ builds $[\mf(Q,f)]$. 

Let us write $(k')^\stab$ for the image of $k^\stab$ in  $[\mf(Q',f')]$ under $\phi_*$; that is, $(k')^\stab = Q' \otimes_Q k^\stab$.
Since $\phi$ is flat, $(k')^\stab$ is the matrix factorization associated to the MCM $Q'/f'$-module $Q' \otimes_Q M$, which is
a high syzygy of $Q' \otimes_Q Q/\fm \cong Q'/\fm'$. Since $Q'/f'$ is an isolated singularity,
we see that $(k')^\stab$ builds $[\mf(Q',f')]$.

Set $X = E(k^\stab)$ and $X' = E((k')^\stab)$, where $E$ is the extension of scalars functor introduced above. 
By Lemma \ref{lem2}, we have that $X$ and $X'$  build $[\mf_G(Q, f)]$ and $ [\mf_G(Q', f')]$, respectively. 
Moreover, $X$ maps to $X'$ under the functor $[\mf_G(Q, f)] \to  [\mf_G(Q', f')]$. Since $\phi$ is flat, we have an isomorphism 
$$
Q' \otimes_Q  H_*(\End_{\mf_G(Q,f)}(X))  \cong H_*(\End_{\mf_G(Q',f')}(X'))
$$
of $Q'$-modules. 
Since the singularities are isolated, $H_*(\End_{\mf_G(Q,f)}(X))$ and $H_*(\End_{\mf_G(Q',f')}(X'))$ are finite length $Q$-modules,
and so, since the natural map $Q' \otimes_Q Q/\fm \to Q'/\fm'$
is an isomorphism, the natural map
$$
H_*(\End_{\mf_G(Q,f)}(X))  \xra{\cong} Q' \otimes_Q H_*(\End_{\mf_G(Q,f)}(X))
$$
is an isomorphism as well. It follows that the map of dga's
$$
\End_{\mf_G(Q,f)}(X)  \to \End_{\mf_G(Q',f')}(X') 
$$
is a quasi-isomorphism, so that Lemma \ref{lem1} yields an equivalence
$
[\mf_G(Q,f)]^\vee \xra{\cong} [\mf_G(Q',f')]^\vee.
$
\end{proof}

  \begin{proof}[Proof of Corollary~\ref{introcor}]    
By Proposition \ref{newprop}, each of $(Q^h)^G \subseteq Q^h$ and $\widehat{Q}^G \subseteq \widehat{Q}$ are module finite extensions. Proposition \ref{introprop} therefore gives equivalences
$$
[\mf_G(Q,f)]^\vee \xra{\cong} [\mf_G(Q^h,f)]^\vee \xra{\cong} [\mf_G(\widehat{Q},f)]^\vee;
$$
applying Theorem \ref{main} to both $[\mf_G(Q^h,f)]$ and $[\mf_G(\widehat{Q},f)]$ finishes the proof. 
\end{proof}

\section{Equivariant objects in the homotopy category of matrix factorizations}

Finally, we address a remark of Spellmann-Young in \cite{SY}. Let $[\mf(Q, f)]^G$ be the category of equivariant objects in $[\mf(Q, f)]$, as defined, for instance, by Carqueville-Runkel in \cite[\S 7.1]{CR}. There is a canonical functor
\begin{equation}
\label{canonical}
[\mf_G(Q, f)] \to [\mf(Q, f)]^G,
\end{equation}
and it is proven in \cite[Proposition 3.6]{SY} that, under certain circumstances, \eqref{canonical} exhibits the target as the idempotent completion of the source (in fact, this result applies more generally to Spellmann-Young's notion of Real equivariant matrix factorizations as well). Spellmann-Young note in \cite[Remark 3.7]{SY} that, if the map \eqref{canonical} (or its Real generalization) were an equivalence, some of their arguments could be shortened; we now apply Theorem~\ref{main} to prove this.

\begin{cor}
\label{fixedpoints}
Suppose we are in the setting of Theorem~\ref{main}, and assume further that $|G|$ is a unit in $Q$. The functor \eqref{canonical} is an equivalence. 
\end{cor}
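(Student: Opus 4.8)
The plan is to deduce this directly from Theorem~\ref{main} together with the result of Spellmann--Young. First I would check that the hypotheses of Theorem~\ref{main}, augmented by the assumption that $|G|$ is a unit in $Q$, put us in the situation covered by \cite[Proposition 3.6]{SY}; that proposition then asserts that the canonical functor \eqref{canonical} exhibits $[\mf(Q,f)]^G$ as the idempotent completion of $[\mf_G(Q,f)]$. Concretely, this means there is an equivalence $[\mf_G(Q,f)]^\vee \xra{\cong} [\mf(Q,f)]^G$ through which \eqref{canonical} factors as
$$
[\mf_G(Q,f)] \longrightarrow [\mf_G(Q,f)]^\vee \xra{\cong} [\mf(Q,f)]^G,
$$
where the first arrow is the canonical functor into the idempotent completion.

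Next I would invoke Theorem~\ref{main}: since $Q$ is henselian and $Q^G \into Q$ is module finite, the category $[\mf_G(Q,f)]$ is idempotent complete. For any additive category $A$, the canonical functor $A \to A^\vee$ is always fully faithful, and it is essentially surjective --- hence an equivalence --- exactly when $A$ is idempotent complete, since in that case every object $(X,e)$ of $A^\vee$ is a direct summand of $X \in A$ and so lies in the essential image. Applying this with $A = [\mf_G(Q,f)]$ shows that the first arrow in the factorization above is an equivalence; composing with the Spellmann--Young equivalence shows that \eqref{canonical} itself is an equivalence, which is the claim.

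The hard part is really just the first, bookkeeping step: verifying that the precise hypotheses under which \cite[Proposition 3.6]{SY} is stated follow from ``the setting of Theorem~\ref{main}, plus $|G|$ invertible in $Q$'' --- in particular that no extra regularity or finiteness condition on $Q$ or on $f$ is secretly needed. Once that is confirmed, the argument is immediate: idempotent completeness of the source, supplied by Theorem~\ref{main}, promotes the ``idempotent completion'' statement of Spellmann--Young to an honest equivalence of categories.
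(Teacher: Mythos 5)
Your overall strategy — use Theorem~\ref{main} to supply idempotent completeness, then invoke the Spellmann--Young result to recognize $[\mf(Q,f)]^G$ as the idempotent completion of $[\mf_G(Q,f)]$ — is the right shape, and your observation that idempotent completeness of the source upgrades an ``idempotent completion'' statement to an honest equivalence is exactly correct. But there is a genuine gap at what you call the ``bookkeeping step,'' and it does not just need to be confirmed: it fails. Spellmann--Young's \cite[Proposition 3.6]{SY} is stated under the additional hypothesis that the local hypersurface ring has an isolated singularity, and no such hypothesis is present in the setting of Theorem~\ref{main} (nor is it added in Corollary~\ref{fixedpoints}). The paper explicitly flags this in the remark immediately after the corollary. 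So you cannot directly cite the statement of \cite[Proposition 3.6]{SY}.

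The paper's fix is to open up the proof of \cite[Proposition 3.6]{SY} rather than cite its statement. The ingredient in that proof that does not require isolated singularities is Elagin's \cite[Theorem 8.7]{elagin2}: since $|G|$ is a unit in $Q$, it gives an equivalence $\Perf(\mf_G(Q,f)) \xra{\cong} [\mf(Q,f)]^G$. On the other side, the Yoneda embedding $[\mf_G(Q,f)] \into \Perf(\mf_G(Q,f))$ exhibits $\Perf(\mf_G(Q,f))$ as the idempotent completion of $[\mf_G(Q,f)]$ (as recalled in the proof of Lemma~\ref{lem1}), and by Theorem~\ref{main} the source is already idempotent complete, so the Yoneda embedding is an equivalence. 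Composing recovers \eqref{canonical} as an equivalence. In short: your plan goes through once you replace ``cite the statement of \cite[Proposition 3.6]{SY}'' by ``cite Elagin's theorem, the one piece of that proof that survives without the isolated-singularity hypothesis.''
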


\begin{proof}
The proof of \cite[Proposition 3.6]{SY} extends to our setting. In detail: by Theorem~\ref{main}, the triangulated category $[\mf_G(Q, f)]$ is idempotent complete. As discussed in the proof of Lemma~\ref{lem1}, the Yoneda embedding
$[\mf_G(Q, f)] \into \Perf(\mf_G(Q, f))$ is an idempotent completion and hence an equivalence in our case. Since $|G|$ is a unit in $Q$, \cite[Theorem
8.7]{elagin2} implies that there is an equivalence $\Perf(\mf_G(Q, f)) \to [\mf(Q, f)]^G$. Composing these two equivalences gives \eqref{canonical}.
\end{proof}

\begin{rem}
While it is assumed in \cite[Proposition 3.6]{SY} that the local hypersurface ring under consideration has an isolated singularity, this assumption is not necessary for Corollary~\ref{fixedpoints}. 
\end{rem}

\bibliographystyle{amsplain}
\bibliography{Bibliography}

\providecommand{\bysame}{\leavevmode\hbox to3em{\hrulefill}\thinspace}
\providecommand{\MR}{\relax\ifhmode\unskip\space\fi MR }
\providecommand{\MRhref}[2]{%
  \href{http://www.ams.org/mathscinet-getitem?mr=#1}{#2}
}
\providecommand{\href}[2]{#2}
\begin{thebibliography}{10}

\bibitem{stacksproject}
The Stacks~Project Authors, \emph{Stacks project},
  http://stacks.math.columbia.edu/.

\bibitem{CR}
Nils Carqueville and Ingo Runkel, \emph{Orbifold completion of defect
  bicategories}, Quantum Topol. \textbf{7} (2016), no.~2, 203--279.
  \MR{3459961}

\bibitem{dyckerhoff}
Tobias Dyckerhoff, \emph{Compact generators in categories of matrix
  factorizations}, Duke Math. J. \textbf{159} (2011), no.~2, 223--274.
  \MR{2824483}

\bibitem{eisenbud}
David Eisenbud, \emph{Homological algebra on a complete intersection, with an
  application to group representations}, Trans. Amer. Math. Soc. \textbf{260}
  (1980), no.~1, 35--64. \MR{570778}

\bibitem{elagin1}
A.~D. Elagin, \emph{Cohomological descent theory for a morphism of stacks and
  for equivariant derived categories}, Mat. Sb. \textbf{202} (2011), no.~4,
  31--64. \MR{2830235}

\bibitem{elagin2}
Alexey Elagin, \emph{On equivariant triangulated categories}, arXiv:1403.7027
  (2014).

\bibitem{GS}
Annie Giokas and Anurag~K. Singh, \emph{On an example of {N}agarajan},
  arXiv:2211.09230 (2022).

\bibitem{krause}
Henning Krause, \emph{Krull-{S}chmidt categories and projective covers}, Expo.
  Math. \textbf{33} (2015), no.~4, 535--549. \MR{3431480}

\bibitem{LW}
Graham~J. Leuschke and Roger Wiegand, \emph{Cohen-{M}acaulay representations},
  Mathematical Surveys and Monographs, vol. 181, American Mathematical Society,
  Providence, RI, 2012. \MR{2919145}

\bibitem{Orlov}
Dmitri Orlov, \emph{Smooth and proper noncommutative schemes and gluing of {DG}
  categories}, Adv. Math. \textbf{302} (2016), 59--105. \MR{3545926}

\bibitem{SY}
Jan-Luca Spellmann and Matthew~B. Young, \emph{Matrix factorizations, {R}eality
  and {K}n\"orrer periodicity}, arXiv:2204.13645 (2022).

\end{thebibliography}
\end{document}